\theoremstyle{plain}
  \newtheorem{theorem}{Theorem}[section]
  \newtheorem{proposition}[theorem]{Proposition}
  \newtheorem{corollary}[theorem]{Corollary}
\theoremstyle{definition}
  \newtheorem{definition}[theorem]{Definition}
  \newtheorem{example}[theorem]{Example}
\theoremstyle{remark}
  \newtheorem{remark}[theorem]{Remark}
\numberwithin{equation}{section}
\def\umapright#1{\smash{
   \mathop{\longrightarrow}\limits^{#1}}}
\def\rmapdown#1{\Big\downarrow\rlap
   {$\vcenter{\hbox{$\scriptstyle#1$}}$}}
\def\tempbaselines
\def\diagram#1{\null\,\vcenter{\tempbaselines
\mathsurround=0pt
    \ialign{\hfil$##$\hfil&&\quad\hfil$##$\hfil\crcr
      \mathstrut\crcr\noalign{\kern-\baselineskip}
  #1\crcr\mathstrut\crcr\noalign{\kern-\baselineskip}}}\,}
\def\pullback#1&#2&#3&#4&#5&#6&#7&#8&{
\diagram{#1&\umapright{#2}&#3\cr
\rmapdown{#4}&&\rmapdown{#5}\cr
#6&\umapright{#7}&#8\cr}}
\def\calA{{\mathcal A}}
\def\calB{{\mathcal B}}
\def\calC{{\mathcal C}}
\def\calD{{\mathcal D}}
\def\calF{{\mathcal F}}
\def\calK{{\mathcal K}}
\def\calO{{\mathcal O}}
\def\calP{{\mathcal P}}
\def\calQ{{\mathcal Q}}
\def\calS{{\mathcal S}}
\def\frakM{{\mathfrak M}}
\def\frakN{{\mathfrak N}}
\def\k{{\underline{k}}}
\def \Aut{\mathop{\rm Aut}\nolimits}
\def\colim{\mathop{\varprojlim}\nolimits}
\def\CM{\mathop{\underline{\rm CM}}\nolimits}
\def\H{\mathop{\rm H}\nolimits}
\def\I{\mathop{\mathbb I}\nolimits}
\def \Hom{\mathop{\rm Hom}\nolimits} 
\def \hotimes{\mathop{\hat{\otimes}}\nolimits} 
\def \Id{\mathop{\rm Id}\nolimits}
\def\L{{\mathbb{L}{\rm K}}}
\def\lim{\mathop{\varinjlim}\nolimits}
\def \Ob{\mathop{\rm Ob}\nolimits} 
\def \Mor{\mathop{\rm Mor}\nolimits}
\def\R{{\mathbb{R}{\rm es}}} 
\def\r{\underline{R}}
\def\Res{\mathop{\rm Res}\nolimits}
\def\sd{\mathop{\rm sd}\nolimits}
\def\sfC{\mathop{\sf C}\nolimits}
\def\sfP{\mathop{\sf P}\nolimits}
\def\st{\mathop{\rm St}\nolimits}
\def\Stab{\mathop{\rm Stab}\nolimits}
\def\T{\mathop{\mathbb T}\nolimits}
\def\SG{{S\rtimes G}}
\def\PG{{\calP\rtimes G}}
\def\QH{{\calQ\rtimes H}}
\def\CC{{\Bbb C}}
\begin{document}

\title[Generalized inductions]
{On homology with coefficients and generalized inductions}

\author{Fei Xu}
\email{fxu@stu.edu.cn}
\address{Department of Mathematics\\
Shantou University\\
Shantou, Guangdong 515063, China}

\subjclass[2010]{}

\keywords{$G$-category, local category, category algebra, Kan extension, (stable) Grothendieck ring, equivariant (pre)sheaf, (co)homology with coefficients, generalized induction, duality}

\thanks{The author \begin{CJK*}{UTF8}{}
\CJKtilde \CJKfamily{gbsn}(徐 斐)
\end{CJK*} is supported by the NSFC grant No. 11671245}

\begin{abstract}
In group representations several inductions given by tensoring with appropriate bimodules may be reconstructed via homology of $G$-posets with $G$-equivariant coefficients. For this purpose, we need various local categories of a finite group $G$, which afford the coefficients. Consequently, the functors among local categories give rise to the homology constructions naturally, and may be used to reformulate some existing results, as well as to deduce new statements.
\end{abstract}

\maketitle

\section{Introduction}

Deligne and Lusztig \cite[Introduction]{DeLu} constructed their induced representations ``...as the alternating sum of the cohomology with compact support of (some variety), with coefficients in certain $G$-equivariant locally constant $l$-adic sheaves of rank one''. Here $G$ is a finite group of Lie type. This idea has been immediately taken by various authors, see for instance \cite{CuLe, Sm}, for representations over fields of characteristic zero or $p$. An early account of the (co)homology theory of $G$-spaces with coefficients in $G$-equivariant sheaves can be found in \cite{De}. Undoubtedly, (co)homology with $G$-equivariant coefficients is also heavily studied in representations and (co)homology of \textit{abstract} finite groups, see \cite{Be, Gr, Sm}. In the latter situation, the geometric objects are often finite (e.g. various subgroup posets with natural $G$-actions), and the coefficients are the $G$-presheaves ($G$-equivariant contravariant functors over $G$-posets). In order to handle the coefficients, one may introduce a category $\mathbf{D}\rtimes G$ (see \cite{Gr}, written as $\mathbf{D}_G$ there), for a small $G$-category $\mathbf{D}$, and can define coefficients to be presheaves (contravariant fucntors) over this category. The aforementioned category is the algebraic predecessor of the Borel construction (or the homotopy orbit space) $B(\mathbf{D}\rtimes G)\simeq EG\times_G B\mathbf{D}$ \cite{DwHe} (or alternatively $[B\mathbf{D}/G].$ in \cite{De}). We shall call such a category a \textit{transporter category} over $\mathbf{D}$, which is a special \textit{Grothendieck construction} (a fibred category). 

In the present paper, we will mainly focus on the case where $\mathbf{D}=\calP$ is a finite $G$-poset, and investigate how the algebra of $\PG$ contributes to group representations. We shall show that the classical induction (Example 2.3), and the Harish-Chandra induction (Example 6.1), can be viewed as (co)homology with coefficients. Moreover even the Alvis-Curtis duality may be realized in a similar way. All of them are usually presented by tensor products with suitable bimodule complexes.

To set in a context, we briefly describe our general program. We shall call $\PG$ and its various quotient categories $\calC$, the \textit{local categories} of $G$ \cite{Xu1}. Several local categories are related by canonical functors
$$
\xymatrix{& \QH \ar[d] & \\
& \PG \ar[dr]^{\pi} \ar[dl]_{\rho} & \\
\calC & & G}
$$
We will focus on how to bridge up representations of $G$ and $\calC$, via $\PG$. The intrinsic properties of transporter categories and their representations are studied in \cite{Xu4}. 

Let $R$ be a commutative ring with identity. In the language of category algebras \cite{We, Xu1}, a $G$-presheaf $\mathfrak{F}$ of $R$-modules on $\calP$ is the same as a right $R\PG$-module. The above (co)homology constructions are certain higher limits $\lim^i_{\calP}\mathfrak{F}$ and $\colim^i_{\calP}\mathfrak{F}$, coming from the Kan extensions $LK_{\pi}\cong\lim_{\calP}, RK_{\pi}\cong\colim_{\calP}:$ mod-$R\PG\to$ mod-$RG$, classical constructions in category theory. It allows us to work with suitable (derived) module categories, in which all the above (co)homology groups have predecessors (finer invariants), and thus carry deeper information. In this way we assert that category representations contribute directly to group representations, see for example Theorem 3.1.

The paper is organized as follows. In Section 2, we review some basics about local categories, and point out the role of $\PG$ in various contexts. We prove, in Section 3, that each actual $RG$-module comes from an $R\PG$-module, under mild assumptions. Then in Sections 4, 5 and 6, we examine some $RG$-module complexes that afford interesting virtual modules. We begin with the analysis of relevant triangulated categories and functors, and end with a reformulation of the Alvis-Curtis duality. Section 7 addresses on an extension to certain small $G$-categories and its connection with Deligne-Lusztig induction.\\

\noindent\textbf{\sc Acknowledgements} I would like to thank my colleagues Ying Zong and Zhe Chen for stimulating discussions.

\section{Local categories}

The present paper may be considered as a continuation of \cite{Xu1}. Thus let us recall some fundamental constructions from there. Let $G$ be a finite group and $\calP$ be a small $G$-category (e.g. $\calS_p$, the poset of non-trivial $p$-subgroups and $\calS_p^1$, the poset of all $p$-subgroups of $G$). Let $x\in\Ob\calP$ and $\alpha\in\Mor\calP$. We shall denote by ${}^g x\in\Ob\calP$ and ${}^g\alpha\in\Mor\calP$ their images under the action by an element $g\in G$. We use 1 for the identity of $G$. Its action on $\calP$ is always trivial. 

If $\calC$ is a small category, we write $B\calC$ (the \textit{classifying space}) for the simplcial complex coming from the nerve of $\calC$ \cite{DwHe}. If $\calC$ admits a $G$-action, $B\calC$ becomes a $G$-simplcial complex (or a $G$-complex in short).

By definition, the \textit{transporter category} $\PG$, constructed on a $G$-category $\calP$, is a special kind of \textit{Grothendieck construction} and has the same objects as $\calP$. Meanwhile its morphisms are the formal products $\alpha g$, $\alpha\in\Mor\calP$ and $g\in G$. If $\beta h$ is another morphism that can be composed with $\alpha g$, then $(\alpha g)(\beta h):=(\alpha{}^g\beta)(gh)$. In fact, these two morphisms are composable in $\PG$ if and only if $\alpha$ and ${}^g\beta$ are composable in $\calP$. For instance, $\alpha g\in\Hom_{\PG}(x,y)$ itself is a composite $(\alpha 1)(1_{{}^gx}g)$, of $1_{{}^gx}g:x\to {}^g x$ and $\alpha 1: {}^g x\to y$. Thus $\alpha g$ may be understood as a ``conjugation'' followed by an ``inclusion''. (Be aware that in general $(1_{{}^gx}g)(\alpha 1)\ne \alpha g$. The former equals ${}^g\alpha g$.) As an example, one can check that a subgroup $H$ is identified with a skeleton of $(G/H)\rtimes G$, and thus they are equivalent. Here $G$ acts on $G/H$ by left multiplication.

From the definition, one easily constructs two canonical functors: an embedding
$$
\iota : \calP \to \PG
$$ 
(identity on objects and $\alpha\mapsto\alpha 1$ on morphisms) and 
$$
\pi : \PG \to G
$$ 
(obvious projection on objects and morphisms, induced by the $G$-functor $\calP \to pt$). 

A \textit{quotient category} $\calC$ of $\PG$ comes from a category extension \cite{We, Xu1}
$$
\calK \to \PG \to \calC.
$$
Here $\calK$ is by definition a functor from $\PG$ to the category of groups. Using axioms, it is easy to identify $\calK$ with a groupoid and a subcategory of $\PG$. Transporter categories of $G$ and their quotients are dubbed as \textit{local categories} of $G$.

\begin{remark} Transporter categories appear whenever there are group actions.
\begin{enumerate}
\item Given a $G$-space $X$, $EG\times_GX$ is sometimes called a homotopy orbit space, which often behaves better than the ordinary orbit space $X/G$ \cite{DwHe}, also see \cite{BeLu, De}. If $X\simeq B\calC$ for a small $G$-category $\calC$, then $EG\times_GX\simeq B(\calC\rtimes G)$.

\item In the theory of representation stability \cite{CEF, LY}, one considers the category FI whose objects are finite sets and morphisms are injective maps. Let $\mathbb{Z}_{> 0}=\{1,2,3,\cdots\}$ and $\calP$ be the power set, excluding the empty subset. The latter is indeed a poset where the relations are set inclusions. The infinite symmetric group $\Sigma_{\infty}=\lim_{\mathbb{N}}\Sigma_n$ is the group of \textit{finite} permutations on $\mathbb{Z}_{> 0}$. There is a natural $\Sigma_{\infty}$-action on $\calP$, while the resulting transporter category is exactly FI$=\calP\rtimes\Sigma_{\infty}$. In a similar fashion, we can construct the category VI.
\end{enumerate}
\end{remark}

Our base ring $R$ is commutative with identity, which often comes from three rings in a suitable $p$-modular system $(K,\calO,k)$ where char$k=p\bigm{|}|G|$. 

The $R$-representations of a small category $\calC$ are just (contravariant) functors on the category \cite{We}, which serve as coefficients when we compute (co)homology of $\calC$, and which form an abelian category, the functor category, $(\mbox{mod-}R)^{\calC}$. The following is a version of the original definition \cite{Be, CuLe, Sm}, and a more general construction is given by Grodal \cite{Gr}.

\begin{definition} Let $\calP$ be a $G$-category and $R$ be a commutative ring with identity. A \textit{$G$-equivariant $R$-coefficient system} on $\calP$ is the same as a contravariant functor $\PG \to$ mod-$R$. 

If $\Delta$ is a $G$-complex, a $G$-equivariant $R$-coefficient system on $\Delta$ is one on $\calP={\rm sd}\Delta$, the $G$-poset of simplices or the barycentric subdivision of $\Delta$.
\end{definition} 

We shall abbreviate ``$G$-equivariant $R$-coefficient systems'' to ``$G$-\textit{presheaves}''. 

If $\calC$ is a finite category, we shall investigate its representations through the category algebra $R\calC$ \cite{We, Xu3} since the category of finitely generated right $R\calC$-modules is equivalent to $(\mbox{mod-}R)^{\calC}$. If $\calC\simeq\calD$ as categories, then $R\calC$ is Morita equivalent to $R\calD$. For instance, given a subgroup $H$, $(G/H)\rtimes G$ is equivalent to $H$, and consequently $R[(G/H)\rtimes G]$ is Morita equivalent to $RH$. A $G$-presheaf on the $G$-poset $G/H$ is a contravariant functor $\mathfrak{N} : (G/H)\rtimes G \to$ mod-$R$ which assigns to each object a right $RH$-module $N$ (unique up to isomorphism) and to each morphism $1_{g_iH}g: g_jH\to g_iH$ (there is a unique $h\in H$ satisfying $gg_j=g_ih$) a map $\mathfrak{N}(1_{g_iH}g) : \mathfrak{N}(g_iH) \to \mathfrak{N}(g_jH)$ given by $\mathfrak{N}(1_{g_iH}g)(n)=nh$, $\forall n\in N$. Note that $\mathfrak{N}$ restricts to a presheaf on $G/H$, and is different from the constant presheaves that come from $RG$-modules, see \cite{Sm} and below. We will compute $\H_i(G/H;\mathfrak{N})$ later on in this section.

The category algebra $R\calC$ has a trivial module $\underline{R}$, determined by the constant functor from $\calC$ which sends every object to $R$ and every morphism to $\Id_R$. Recall that there exists an object-wise tensor product, written as $\hotimes$, between functors. It makes mod-$R\calC$ (and hence $D^b(\mbox{mod-}R\calC)$) into a symmetric monoidal category with tensor identity the trivial module $\underline{R}$. When $\calC=\PG$, for a finite $G$-poset $\calP$, there is a canonical isomorphism $R(\PG)\cong(R\calP)\rtimes G$ (the latter being the skew group algebra built on $R\calP$). Hence we shall write the transporter category algebra as $R\PG$. When $R$ is a field, this is a Gorenstein algebra, because both $R\calP$ and $RG$ are. It is the reason why we may consider the maximal Cohen-Macaulay modules \cite{Xu3}. For a Gorenstein algebra, a module is of finite projective dimension if and only if it is of finite injective dimension. If $\mathfrak{F}\in$ mod-$R\PG$, then $\mathfrak{F}$ is of finite projective dimension if and only if $\mathfrak{F}(x), \forall x$, is of finite projective dimension as an $RG_x$-module, where $G_x\subset G$ is the automorphism group of $x\in\Ob(\PG)$ (or equivalently the stabilizer of $x$) \cite{Xu1}.

The functor $\pi : \PG \to G$ induces a \textit{restriction} $Res_{\pi} : \mbox{mod-}RG \to \mbox{mod-}R\PG$ and we shall denote the values by $\kappa_M=Res_{\pi}M$, for each $M \in \mbox{mod-}RG$. We often call $\kappa_M$ a constant $G$-presheaf on $\calP$. When $M=R$, we have $\r=\kappa_R$. The restriction is equipped with two adjoint functors \cite{Xu1}
$$
LK_{\pi}, RK_{\pi} : \mbox{mod-}R\PG \to \mbox{mod-}RG,
$$
called the left and right \textit{Kan extensions}. Suppose $\mathfrak{F}\in\mbox{mod-}R\PG$. Then $\mathfrak{F}$ restricts to an $R\calP$-module, and
$$
LK_{\pi}\mathfrak{F}\cong\lim_{\calP}\mathfrak{F}\cong\mathfrak{F}\otimes_{R\calP}\r, RK_{\pi}\mathfrak{F}\cong\colim_{\calP}\mathfrak{F}\cong\Hom_{R\calP}(\r,\mathfrak{F}).
$$
We shall be mostly dealing with the left Kan extension. As for the right Kan extension, discussion is analogous. When $\calP=G/H$ for some subgroup $H$, the above functors are the classical restriction $\downarrow^G_H$, and induction $\uparrow^G_H$. 

For future reference, the derived functors of the Kan extensions are the higher limits, which are isomorphic to the category (co)homology \cite{We,  Xu1, Xu2} $\lim^i_{\calP}\mathfrak{F}=\H_i(\calP;\mathfrak{F})$ and $\colim^i_{\calP}\mathfrak{F}=\H^i(\calP;\mathfrak{F})$. By definition, the group $\H_i(\calP;\mathfrak{F})$ is the simplicial homology group of $\calP$ with coefficients in $\mathfrak{F}$. (The cohomology groups $\H^i(\calP;\mathfrak{F})$ are defined analogously.) In other words, they come from a complex of $RG$-modules $\sfC_*(\calP;\mathfrak{F})$ such that
$$
\sfC_i(\calP;\mathfrak{F})=\bigoplus_{x_0\to\cdots\to x_i}\mathfrak{F}(x_i). 
$$
Here $x_0, \cdots, x_i$ are objects of $\calP$, $x_0\to\cdots\to x_i$ is assumed to be a normalized $i$-chain (thus none morphism in the chain is an identity). (If $\calP$ is a finite $G$-poset, there are finitely many non-zero $\sfC_i(\calP;\mathfrak{F})$, each of which is of finite $R$-rank.) The differential is
$$
\hspace{-4cm}\partial(f_{x_0\to\cdots\to x_i})=\sum_{t=0}^{i-1}(-1)^t f_{x_0\to\cdots\to\hat{x_t}\to\cdots\to x_i} 
$$
$$
\hspace{5cm} + (-1)^i[\mathfrak{F}(x_{i-1}\to x_i)(f)]_{x_0\to\cdots\to x_{i-1}},
$$
where the subscript indicates to which summand an element belongs.

The $RG$-module structure on $\sfC_i(\calP;\mathfrak{F})$ is given as follows. For each $g\in G$, we have
$$
f_{x_0\to\cdots\to x_i}\cdot g = \{[\mathfrak{F}(1_{x_i}g)](f)\}_{{}^{g^{-1}}x_0\to\cdots\to {}^{g^{-1}}x_i}.
$$

By comparison, the set of normalized $i$-simplicies of $B\calP$ is a $G$-set, for each $i$, and it gives a right $RG$-module structure on $\sfC_i(B\calP,R)$ by $\sigma\cdot g:={}^{g^{-1}}\sigma$. Through identifying $\sfC_i(B\calP,R)$ with $\sfC_i(\calP;\r)$, we see the two actions coincide. One may look up for \cite[Chapter 10]{Sm} for further background material on homology representations.

\begin{example} Let $\mathfrak{N}$ be the $G$-presheaf on $G/H$ corresponding to an $RH$-module $N$. Then $\H_0(G/H;\mathfrak{N})=\bigoplus_i\mathfrak{N}(g_iH)$. If $gg_j=g_ih$, then $g$ acts on the summand $\mathfrak{N}(g_iH)=N$ via $\mathfrak{N}(1_{g_iH}g)$, that is $n\cdot g=nh \in \mathfrak{N}(g_jH)=N$ for every $n\in\mathfrak{N}(g_iH)$. Since $g_i^{-1}g=hg_j^{-1}$, $\H_0(G/H;\mathfrak{N})\cong N\otimes_{RH}RG$ is the usual induction of $N$. We emphasize that $\H_0(G/H;\mathfrak{N})\cong\H^0(G/H;\mathfrak{N})$ as $RG$-modules, since the classical induction is the same as the classical coinduction. (Then it will not be a surprise that later on one may use cohomology to define the Deligne-Lusztig induction.) 

Let $\H_*(\calP;\mathfrak{F})$ stand for the alternating sum $\sum_i(-1)^i\H_i(\calP;\mathfrak{F})$, a virtual module. Since $G/H$ is a (zero-dimensional) finite $G$-poset, we have $\H_*(G/H;\mathfrak{N})=\H_0(G/H;\mathfrak{N})$. 
\end{example}

To pass from a $G$-complex to a $G$-category, we record the following result which is perhaps well-known to the experts.

\begin{proposition} Let $\Delta$ be a $G$-complex and $\calP={\rm sd}\Delta$ be its barycentric subdivision. Then the canonical $G$-equivariant homeomorphism $\tau: B\calP \to \Delta$ induces a chain homotopy equivalence of $RG$-module complexes
$$
\sfC_*(\calP;\tau^*\mathfrak{F}) \to \sfC_*(\Delta;\mathfrak{F}),
$$
where $R$ is a commutative ring with identity and $\mathfrak{F}$ is a $G$-presheaf on $\Delta$. 
\end{proposition}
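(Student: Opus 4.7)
The proposition is the $G$-equivariant, local-coefficient version of the classical fact that barycentric subdivision induces a chain homotopy equivalence of simplicial chain complexes; my plan is to adapt the standard proof, with the crucial adjustment that I avoid the usual ``last vertex'' construction (which depends on a non-equivariant vertex ordering) by working through a skeletal filtration.

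First, I define the subdivision chain map $sd: \sfC_*(\Delta; \mathfrak{F}) \to \sfC_*(\calP; \tau^*\mathfrak{F})$ on a generator $\sigma \otimes f$ (for an $n$-simplex $\sigma$ of $\Delta$ and $f \in \mathfrak{F}(\sigma)$) by
$$sd(\sigma \otimes f) = \sum_{\pi \in \Sigma_{n+1}} \mathrm{sgn}(\pi) \, (\sigma_\pi^{(0)} \subsetneq \cdots \subsetneq \sigma_\pi^{(n)} = \sigma) \otimes f,$$
where $\sigma_\pi^{(j)}$ is the face of $\sigma$ spanned by the first $j+1$ vertices in the ordering $\pi$. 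Every chain in the sum terminates at $\sigma$, so the coefficient $f \in \mathfrak{F}(\sigma) = \tau^*\mathfrak{F}(\sigma_\pi^{(n)})$ slots in correctly. Summing over all orderings makes $sd$ manifestly $G$-equivariant without any choice of vertex order, and the usual constant-coefficient calculation (carrying $f$ along unchanged in $\mathfrak{F}(\sigma)$) yields $\partial \circ sd = sd \circ \partial$.

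Next, to show $sd$ is a chain homotopy equivalence, I filter both complexes by the dimension of the top simplex: $F_p$ collects contributions from simplices of $\Delta$ (resp.\ from chains of $\calP$ ending at simplices) of dimension at most $p$. Both filtrations are $G$-equivariant, bounded, and preserved by $sd$. On the $p$-th associated graded quotient, each side decomposes into a direct sum over $p$-simplices of $\Delta$ respecting $G$-orbits. For a single $p$-simplex $\sigma$, the right-hand side contributes $\mathfrak{F}(\sigma)$ placed in degree $p$, while the left-hand side contributes the augmented chain complex of the nerve of the poset of proper faces of $\sigma$ (topologically the boundary sphere $S^{p-1}$) with constant coefficient $\mathfrak{F}(\sigma)$. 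A direct computation identifies the latter as $\mathfrak{F}(\sigma)$ concentrated in degree $p$ up to chain homotopy, with $sd$ realizing the equivalence.

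Finally, since $sd$ induces a chain homotopy equivalence on each associated graded piece and the filtrations are bounded, a standard spectral sequence comparison---or equivalently an inductive construction of a chain homotopy inverse filtration by filtration---assembles these into a chain homotopy equivalence of the total complexes. The main obstacle is maintaining $G$-equivariance throughout the inductive step, since the classical constructions typically choose a vertex ordering that is not $G$-invariant; this is resolved by performing the induction one $G$-orbit of simplices at a time, making the required choices only at orbit representatives and extending equivariantly, which the skeletal filtration decomposition makes transparent.
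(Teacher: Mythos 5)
Your route is genuinely different from the paper's. The paper simply identifies the relevant chain map explicitly --- the ``last-vertex'' map $f_{\sigma_0\subsetneq\cdots\subsetneq\sigma_i}\mapsto f_{\sigma_i}$ (zero if $|\sigma_i|\ne i$), which is automatically $G$-equivariant because it makes no choices --- and then cites \cite[Chapter~VI, Exercise~1]{Wh} for the fact that this is a chain homotopy equivalence with local coefficients; equivariance of the homotopies comes along for free because the map and the homotopies there are natural in the complex. You instead build the subdivision map $sd$ in the opposite direction and try to prove the equivalence from scratch via the skeletal filtration and a graded-piece comparison. Two remarks before the substantive point: (i) the statement asks for a map $\sfC_*(\calP;\tau^*\mathfrak{F})\to\sfC_*(\Delta;\mathfrak{F})$ \emph{induced by} $\tau$, i.e.\ the last-vertex map, so even if your argument closes you should say explicitly that this canonical map is a two-sided homotopy inverse to your $sd$; (ii) your $sd$ formula is fine and manifestly equivariant.

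The substantive issue is in the graded-level step. On $\mathrm{gr}_p$, the left side for a fixed $p$-simplex $\sigma$ is the (shifted) augmented simplicial chain complex of $\mathrm{sd}(\partial\sigma)\simeq S^{p-1}$ with constant coefficients $\mathfrak{F}(\sigma)$, carrying an action of the stabilizer $G_\sigma$; the right side is $\mathfrak{F}(\sigma)$ (sign-twisted by the $G_\sigma$-permutation of vertices) concentrated in degree $p$. You assert these are chain homotopy equivalent ``with $sd$ realizing the equivalence.'' Over $R$ that is true (bounded complex of free $R$-modules with free homology), and $sd$ is indeed a quasi-isomorphism. But for a \emph{$G_\sigma$-equivariant} chain homotopy equivalence you need the complex $\widetilde{\sfC}_*(\mathrm{sd}(\partial\sigma);R)$ to split off its homology $RG_\sigma$-linearly, and its terms are permutation $RG_\sigma$-modules, which are not projective when $\mathrm{char}\,R$ divides $|G_\sigma|$. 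So the splitting is not automatic, and your proposed fix --- ``choose at orbit representatives and extend equivariantly'' --- only handles the free part of the action; it does not produce a $G_\sigma$-equivariant contraction on the orbit of $\sigma$ itself, which is exactly where the difficulty lives. Consequently the passage from graded-level quasi-isomorphism to an $RG$-chain-homotopy equivalence of the total complexes is not established. What does go through cleanly from your argument is that $sd$ (and the last-vertex map) are quasi-isomorphisms of $RG$-complexes; to upgrade to an honest $G$-equivariant chain homotopy equivalence you would need a naturality/acyclic-models argument (as in the cited Whitehead exercise) rather than an orbit-by-orbit splitting.
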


\begin{proof} The natural $G$-simplicial map (inducing a homeomorphism) \cite[Proposition 66.1]{CuRe} $\tau: B\calP \to \Delta$ maps a simplex $\sigma_0 \to\cdots\to \sigma_i$ to $\sigma_i$. A $G$-presheaf $\mathfrak{F}$ on $\Delta$ induces one on $B\calP$ via $\tau$, written as $\tau^*\mathfrak{F}$. It gives rise to a chain homotopy equivalence \cite[Chapter VI, Exercise 1]{Wh}, 
$$
\sfC_*(\calP;\tau^*\mathfrak{F}) \to \sfC_*(\Delta;\mathfrak{F}),
$$
such that $f_{\sigma_0\to\cdots\to\sigma_i}\in\sfC_i(\calP;\tau^*\mathfrak{F})=\bigoplus_{\sigma_0\to\cdots\to\sigma_i}\tau^*\mathfrak{F}(\sigma_i)$ is sent to $f_{\sigma_i} \in \sfC_i(\Delta;\mathfrak{F})=\bigoplus_{|\sigma|=i}\mathfrak{F}(\sigma)$. Note that if $|\sigma_i|\ne i$, then the corresponding image is set to be zero. 
\end{proof}

Finally, the functor $\iota: \calP\to\PG$ leads to a commutative diagram
$$
\xymatrix{\mbox{mod-}R\calP \ar[rrr]^{LK_{\pi}\ \ (\mbox{\tiny{resp.}}\ RK_{\pi})} & & & \mbox{mod-}R\\
\mbox{mod-}R\PG \ar[rrr]_{LK_{\pi}\ \ (\mbox{\tiny{resp.}}\ RK_{\pi})} \ar[u]^{Res_{\iota}} & & & \mbox{mod-}RG \ar[u]_{Res_{\iota}}}
$$

\section{Constructing actual group representations}

From Section 3 to Section 6, $G$-categories are finite posets and the resulting local categories are always finite. 

When we study the representations of $R\calC$, we may assume $\calC$ to be connected. Otherwise $R\calC$ will be a direct product of several $R\calC_i$, each $\calC_i$ being a connected component of $\calC$. However even if $\calC=\PG$ is connected, $\calP$ needs not to be. One may consider the example of $\calP=G/H$ for some proper subgroup $H$. We shall denote by $\Pi_0(\calP)=\Pi_0(B\calP)$ the number of connected components of $\calP$.

\begin{theorem} Given a connected transporter category $\PG$, if $\Pi_0(\calP)$ is invertible in $R$, then every $RG$-module $M$ is a direct summand of $LK_{\pi}\mathfrak{F}$, for some $R\PG$-module $\mathfrak{F}$.

Particularly if $\calP$ is connected, $M\cong LK_{\pi}\kappa_M$, for any $RG$-module $M$.
\end{theorem}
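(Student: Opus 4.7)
The plan is to compute $LK_{\pi}\kappa_M$ explicitly for the particular choice $\mathfrak{F}=\kappa_M$, identify the result with a classical restricted-then-induced $RG$-module, and then invoke the standard transfer argument.

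First I would use the formula $LK_{\pi}\mathfrak{F}\cong\mathfrak{F}\otimes_{R\calP}\r$ recalled in Section 2. Observe that the composite $\pi\iota:\calP\to G$ is the trivial functor, so that $\Res_{\iota}\kappa_M$ is the constant $R\calP$-module with value $M$ (the non-trivial $G$-action on $M$ is recorded only through the ambient $R\PG$-structure, not through the restriction to $R\calP$). Consequently the underlying $R$-module of $LK_{\pi}\kappa_M$ is the colimit of a constant $M$-valued diagram over $\calP$, namely $M^{\Pi_0(\calP)}$, and the $RG$-action on it blends the original action on $M$ with the permutation action of $G$ on the set of connected components of $\calP$.

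Next, since $\PG$ is connected, $G$ acts transitively on $\Pi_0(\calP)$. Pick a component and let $H\le G$ be its stabilizer; then $\Pi_0(\calP)\cong G/H$ as $G$-sets and $[G:H]=\Pi_0(\calP)$ is invertible in $R$ by hypothesis. Matching the two actions should yield a natural $RG$-module isomorphism
$$
LK_{\pi}\kappa_M\;\cong\;M\otimes_R R[G/H]
$$
with diagonal $G$-action on the right, and the standard identification $m\otimes gH\mapsto g\otimes g^{-1}m$ exhibits this in turn as $(M\downarrow^G_H)\uparrow^G_H$. Under this identification the counit $LK_{\pi}\kappa_M\to M$ of the adjunction $LK_{\pi}\dashv\Res_{\pi}$ coincides with the usual surjection $g\otimes m'\mapsto gm'$. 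Because $[G:H]$ is invertible in $R$, the transfer
$$
m\;\longmapsto\;\frac{1}{[G:H]}\sum_{gH\in G/H}g\otimes g^{-1}m
$$
is a well-defined $G$-equivariant section, which exhibits $M$ as a direct summand of $LK_{\pi}\kappa_M$; thus $\mathfrak{F}=\kappa_M$ does the job. For the ``particularly'' clause, $\calP$ connected forces $H=G$, so the coset factor collapses to $R$ and the counit is already an isomorphism.

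The real obstacle lies in the second paragraph: one must carefully verify that the $G$-action inherited from the $R\PG$-structure on $\kappa_M$, once one passes to the colimit over $\calP$, reassembles with the permutation of components to give \emph{precisely} the diagonal action on $M\otimes_R R[G/H]$, and not a twist by some cocycle. Once this bookkeeping is pinned down via a choice of component representatives, everything else reduces to the classical transfer argument.
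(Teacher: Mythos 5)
Your proposal takes essentially the same route as the paper: set $\mathfrak{F}=\kappa_M$, compute $LK_{\pi}\kappa_M\cong\H_0(\calP;\kappa_M)\cong M\otimes_R R(G/H)\cong M\otimes_{RH}RG$ with $H$ the stabilizer of a component, and split the counit by averaging over $G/H$, which needs $[G:H]=\Pi_0(\calP)$ invertible. The cocycle concern you flag at the end is in fact a non-issue: $\kappa_M$ restricts to the genuinely constant functor on $R\calP$, so $\sfC_*(\calP;\kappa_M)$ is literally $\sfC_*(B\calP,R)\otimes_R M$ with the diagonal $G$-action, exactly as the paper records, and the identification with $(M\downarrow^G_H)\uparrow^G_H$ is untwisted.
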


\begin{proof} Under the circumstance, $G$ permutes the set of connected components $\{\calP_1,\cdots,\calP_n\}$ of $\calP$, $n=\Pi_0(\calP)$. Set $H=\Stab_G(\calP_1)$. 

We know that $\H_*(\calP;\mathfrak{F})$ is computed by $\sfC_*(\calP;\mathfrak{F})$, and
$$
LK_{\pi}Res_{\pi}M\cong\lim_{\calP}Res_{\pi}M\cong\H_0(\calP;\kappa_M).
$$
Since $\sfC_*(\calP;\kappa_M)\cong\sfC_*(\calP;\r)\otimes_RM$, where $G$ acts diagonally, $-\otimes_RM$ is exact and $\sfC_*(\calP;\r)=\sfC_*(B\calP,R)$, we have $\H_0(\calP;\kappa_M)\cong\H_0(B\calP,R)\otimes_RM\cong R(G/H)\otimes_RM$. Note that the right $RG$-module structure on $R(G/H)$ is given by $g_iH\cdot g = g^{-1}g_iH$, and $M\otimes_RR(G/H)\cong M\otimes_{RH}RG$.
 
Consider the counit of the adjunction $\psi : LK_{\pi}Res_{\pi} \to \Id$. it admits a section $\eta : \Id \to LK_{\pi}Res_{\pi}$ as long as $n$ is invertible in $R$. It implies that each $RG$-module $M$ is a direct summand of $LK_{\pi}\mathfrak{F}$, with $\mathfrak{F}=\kappa_M$.
\end{proof}

In the language of monads, it leads to an equivalence between mod-$RG$ and some subcategory of mod-$R\PG$, under the assumption that $\Pi_0(\calP)$ is invertible in $R$. This will be discussed in a different place.

\begin{remark} Several interesting examples, in group representations, are consequences of this simple observation.
\begin{enumerate}
\item A special case is $\calP=G/H$, where $H$ contains a Sylow $p$-subgroup and $R$ is a field of characteristic $p$. 

\item Another well-exploited case is $\calP=\calS_p$, and $R$ is a field of characteristic $p$. Because Sylow $p$-subgroups distribute evenly in the connected components, $\Pi_0(\calS_p)$ is always coprime to $p$.

\item We may also consider the case of $\calS^1_p$, and $\calS_p$ if $O_p(G)\ne 1$, which are connected. (When $G$ is a finite group of Lie type in defining characteristic $p$, $\calS_p$ is always connected.) Under the circumstance we recover a result of Smith, see \cite[Section 10.1]{Sm}.
\end{enumerate}
\end{remark}

The second remark may be considered as the representation-theoretic counterpart of a topological statement that $BG\simeq_p B(\calS_p\rtimes G)$, which says that $\calS_p\rtimes G$ and $G$ has the same mod $p$ (co)homology. The idea is that one may glue up classifying spaces of certain subgroups to approximate that of $G$, see \cite[Chapter 7]{Sm}.

\section{On bounded derived categories and their Grothendieck rings}

We shall now turn to virtual modules. To this end, we will work with modules complexes. Derived module categories and their Grothendieck groups are the natural context. Our notations follow \cite[Section 6.8]{Zim}, but we write $D^b(A)=D^b(\mbox{mod-}A)$, for an algebra $A$ and its finitely generated right modules. The module category is identified with stalk complexes concentrated in degree 0.

Let $R$ be a field. It is known that $G_0(D^b(A))$ (the Grothendieck group of a triangulated category) is isomorphic to $G_0(\mbox{mod-}A)$ (the Grothendieck group of an Abelian category) \cite{Zim}. For our convenience, we shall denote both by $G_0(A)$, called the \textit{Grothendieck group} of $A$. For an object $\sfC_*\in D^b(A)$, one has in $G_0(A)$ that
$$
[\sfC_*]=\sum_i(-1)^i[\sfC_i]=\sum_i(-1)^i[\H_i(\sfC_*)].
$$
We are interested in the case of $A=R\calC$ for a local category $\calC$ of $G$. Under the circumstance, $G_0(R\calC)$ has a natural ring structure, and will be called the \textit{Grothendieck ring} of $R\calC$. At this point, let us focus on the transporter category $\PG$. Since $R\PG$ is Gorenstein \cite{Xu3}, we may continue to consider the stable category $\CM(R\PG)$, of maximal Cohen-Macaulay modules, which comes from a locolization sequence of tensor triangulated categories
$$
D^b(\mbox{proj-}R\PG)\to D^b(\mbox{mod-}R\PG)\to \CM(R\PG)
$$
and results in a short exact sequence of Abelian groups
$$
0\to G_0(D^b(\mbox{proj-}R\PG)){\buildrel{c}\over{\to}} G_0(R\PG)\to G^{st}_0(R\PG) \to 0.
$$
Here $G_0^{st}(R\PG)=G_0(\CM(R\PG))$ is the \textit{stable Grothendieck ring}, and $c$ is the Cartan map. The above maps are actually ring homomorphisms. Moreover since any module tensoring with a module of finite projective dimension is still of finite projective dimension, $G_0(D^b(\mbox{proj-}R\PG))$ becomes a (two-sided) ideal of $G_0(R\PG)$. 

The stable Grothendicek group is interesting only when the characteristic of $R$ divides the order of $G$, for otherwise $D^b(\mbox{proj-}R\PG)=D^b(\mbox{mod-}R\PG)$. As we mentioned earlier, if $\calP=G/G$ is trivial, then $R(G/G)\rtimes G\cong RG$. Under the circumstance, $\CM(RG)$ is commonly written as $\mbox{\underline{mod}-}RG$ or ${\rm stmod}RG$. 

Now we turn to compare bounded derived categories, based on the functor $\pi : \PG \to G$. The restriction $Res_{\pi}$ and the Kan extensions induce triangulated functors \cite{Xu3}
$$
\R_{\pi} : D^b(\mbox{mod-}RG) \to D^b(\mbox{mod-}R\PG)
$$
and
$$
\L_{\pi}, \mathbb{R}{\rm K}_{\pi} : D^b(\mbox{mod-}R\PG) \to D^b(\mbox{mod-}RG).
$$
\begin{remark}
Because $Res_{\pi}$ is exact, $\R_{\pi}$ is still adjoint to $\L_{\pi}$ and $\mathbb{R}{\rm K}_{\pi}$. 
\end{remark}

Since $\R_{\pi}$ maps $D^b(\mbox{proj-}RG)=D^b(\mbox{inj-}RG)$ into $D^b(\mbox{proj-}R\PG)=D^b(\mbox{inj-}R\PG)$, and $\L_{\pi}, \mathbb{R}{\rm K}_{\pi}$ does the converse, they produce functors between the stable categories
$$
\R'_{\pi} : \mbox{\underline{mod}-}RG \to \CM(R\PG)
$$
and
$$
\L'_{\pi}, \mathbb{R}{\rm K}'_{\pi} : \CM(R\PG) \to \mbox{\underline{mod}-}RG.
$$
Subsequently they give rise to maps between the (stable) Grothendieck groups
$$
r_{\pi}: G_0(RG)\to G_0(R\PG), 
$$

$$
r'_{\pi}: G^{st}_0(RG)\to G^{st}_0(R\PG),
$$

$$
lk_{\pi}, rk_{\pi}: G_0(R\PG)\to G_0(RG), 
$$

$$
lk'_{\pi}, rk'_{\pi}: G^{st}_0(R\PG) \to G^{st}_0(RG).
$$
Taking the tensor structures into account, it is obvious that both $r_{\pi}$ and $r'_{\pi}$ are ring homomorphisms. We may explicitly write out the map $lk_{\pi}$ (and analogously $lk'_{\pi}, rk_{\pi}, rk'_{\pi}$) by
$$
[\mathfrak{F}]\mapsto\sum_i(-1)^i[\H_i(\calP;\mathfrak{F})],
$$
for each $\mathfrak{F}\in \mbox{mod-}R\PG$. It makes sense since $\calP$ is finite and the above becomes a finite alternating sum.

\section{On homology representations} 

For an arbitrary finite group $G$, suppose $\calP=\calB_p$ is the $G$-poset of $p$-subgroups $V$ satisfying $V=O_p(N_G(V))$ (Bouc's poset, see \cite{Sm}). It is always $G$-homotopy equivalent to $\calS_p$.

Let $G$ be a finite group with a split BN-pair, of rank $|S|>1$ and characteristic $p$ (here $(W,S)$ stands for the Weyl group and its set of distinguished generators), see \cite{CuRe} for background. The Tits building $\Delta$ is a simplicial complex whose simplices are indexed by the parabolic subgroups, and where the inclusion relation is opposite to the inclusions of parabolic subgroups. One often needs to consider the (co)homology of $\Delta$ with coefficients, see for instance \cite{CuRe, Sm}. 

\begin{example} Let $G$ be a finite group with a split BN-pair, of rank $|S|>1$ and characteristic $p$. Since $\calB_p$ is the poset of unipotent radicals (of parabolic subgroups), it is identified with the poset of simplices in $\Delta$. It means that $\calB_p\cong\sd\Delta$ is the barycentric subdivision of $\Delta$ and we may apply Proposition 2.4., where $\tau: B\calB_p \to \Delta$ maps a simplex $V_{I_0}\to\cdots\to V_{I_i}$ to $P_{I_i}$.

If $\underline{R}$ is the constant coefficient system on $\Delta$, $\tau^*\underline{R}=\underline{R}$ and there are chain homotopy equivalence of $RG$-module complexes
$$
\sfC_*(\calB_p;\underline{R})\simeq\sfC_*(\Delta;\underline{R}).
$$
The augmented complex gives rise to the \textit{Steinberg module} (differ by a sign $(-1)^{|S|-1}$)
$$
\st_G:=(-1)^{|S|-1}\sum_{I\subseteq S}(-1)^{|I|}[R\uparrow^G_{P_I}],
$$
known to be afforded by $\H_{|S|-1}(\Delta,R)=\H_{|S|-1}(\Delta;\r)$ \cite{CuRe}. Here $R$ can be either $K$ or $k$ from a $p$-modular system $(K,\calO,k)$.
\end{example}

Let $G$ be an arbitrary finite group. In \cite{Xu2}, we had a formula that, for an $RG$-module $M$, regarded as a stalk complex at degree 0,
$$
\L_{\pi}\R_{\pi}M\cong\L_{\pi}\kappa_M\cong\sfC_*(\calP;\kappa_M)\cong\sfC_*(\calP;\underline{R})\otimes_R M,
$$
where $G$ acts diagonally on the tensor product. It is based on an explicit calculation that the bar resolution of $\r$ is sent to a projective resolution of $\sfC_*(\calP;\r)$, by the left Kan extension. We readily deduce that (if $R$ is a field)
$$
lk_{\pi}([\kappa_M])=[\sfC_*(\calP;\kappa_M)]=\sum_{i}(-1)^i[\H_i(\calP;\kappa_M)]\in G_0(RG),
$$ 
which matches our definition of $lk_{\pi}$ in last section. It prompts us to look further into some module complexes.

Since there is a counit $\Psi: \L_{\pi}\R_{\pi}\to \Id_{D^b(\mbox{\tiny{mod-}}RG)}$, it results in, for each $RG$-module $M$, a map $\Psi_M : \L_{\pi}\R_{\pi}M \to M$. Combining previous calculations, it is identified with the augmentation
$$
\Psi_M: \sfC_*(\calP;\kappa_M) \to M,
$$
which on degree zero is the summation $\sfC_0(\calP;\kappa_M)=\bigoplus_{x\in\Ob\calP}M \to M$. 

\begin{proposition} The mapping cone $Cone(\Psi_M)$ represents the following element in $G_0(RG)$
$$
\sum_{i}(-1)^i[\sfC_i(\calP;\kappa_M)]-[M].
$$
If $R$ is a field, it is identified with $\sum_{i}(-1)^i[\H_i(\calP;\kappa_M)]-[M]$.
\end{proposition}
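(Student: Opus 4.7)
The plan is to exploit the fact that the mapping cone fits into a distinguished triangle in $D^b(\mbox{mod-}RG)$, and then to use additivity of the class in the Grothendieck group, along with the Euler–Poincaré principle. More precisely, first I would invoke the canonical triangle
$$
\sfC_*(\calP;\kappa_M)\xrightarrow{\Psi_M} M\to \operatorname{Cone}(\Psi_M)\to\sfC_*(\calP;\kappa_M)[1]
$$
in $D^b(\mbox{mod-}RG)$. Additivity of $[-]$ on triangles in $G_0(D^b(\mbox{mod-}RG))=G_0(RG)$ shows that $[\operatorname{Cone}(\Psi_M)]$ can be expressed in terms of $[\sfC_*(\calP;\kappa_M)]$ and $[M]$; concretely, if one realises $\operatorname{Cone}(\Psi_M)$ as the augmented complex with $M$ placed in degree $-1$ (so that the differential in degree $0$ is $\Psi_M$), then its class reads off directly as
$$
[\operatorname{Cone}(\Psi_M)]=\sum_i(-1)^i[\sfC_i(\calP;\kappa_M)]-[M].
$$

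Second, I would justify the passage $[\sfC_*(\calP;\kappa_M)]=\sum_i(-1)^i[\sfC_i(\calP;\kappa_M)]$ by the standard telescoping argument: since $\sfC_*(\calP;\kappa_M)$ is bounded and every term is a finitely generated $RG$-module (recall from Section 2 that $\calP$ is a finite $G$-poset, so only finitely many $\sfC_i$ are non-zero and each has finite $R$-rank), one decomposes it by successive brutal truncations and applies additivity along the resulting short exact sequences of complexes. This is a general fact about $G_0$ of bounded derived categories that was already cited from \cite{Zim} in Section 4.

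Third, when $R$ is a field I would use the isomorphism $G_0(D^b(\mbox{mod-}RG))\cong G_0(\mbox{mod-}RG)$ that sends a bounded complex to the alternating sum of the classes of its cohomology groups. Applied to the complex $\sfC_*(\calP;\kappa_M)$, this identifies
$$
\sum_i(-1)^i[\sfC_i(\calP;\kappa_M)]=\sum_i(-1)^i[\H_i(\calP;\kappa_M)]
$$
in $G_0(RG)$, yielding the second form of the formula. I do not foresee any genuine obstacle: once the distinguished triangle is in place, the entire argument is formal. The only step that requires care is keeping track of sign conventions for the cone and for the shift, in order to land on exactly the expression stated in the proposition rather than its negative.
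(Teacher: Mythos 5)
Your proposal is correct and matches the paper's (implicit) argument: the paper gives no separate proof because the statement is a direct consequence of the identification of $\Psi_M$ with the augmentation $\sfC_*(\calP;\kappa_M)\to M$ made just above the proposition, combined with the general formula $[\sfC_*]=\sum_i(-1)^i[\sfC_i]=\sum_i(-1)^i[\H_i(\sfC_*)]$ already recorded in Section 4. The sign convention you flag is indeed the only point to watch; the paper resolves it the same way you do, by identifying $Cone(\Psi_M)$ with the augmented chain complex $\widetilde{\sfC}_*(\calP;\kappa_M)$ (compare the explicit identification $Cone(\Theta_M)\cong\widetilde{\sfC}_*(\Delta;\mathfrak{F}_M)$ in the proof of Theorem 6.2), so that $M$ sits in degree $-1$ and the class comes out as $\sum_i(-1)^i[\sfC_i(\calP;\kappa_M)]-[M]$ rather than its negative.
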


When $\calP=\calS_p$ and $M=R$, $Cone(\Psi_M)$ may be called the \textit{generalized Steinberg module}. If moreover $R=k$ is a field of characteristic $p$, $Cone(\Psi_k)$ (the generalized Steinberg module at $p$) is proved by Webb to be an object of $D^b(\mbox{proj-}kG)$, see \cite{Sm}. Be aware that in other places, homology representations and the (generalized) Steinberg representation are constructed in $a(RG)$, the representation ring (see for instance \cite{CuRe}). Here we study them in $G_0(RG)$, through the canonical surjection $a(RG)\to G_0(RG)$.

\begin{definition}
Let $R$ be a commutative ring with identity. We call $\H_{\calP}(G;\mathfrak{F})=\sum_{i}(-1)^i[\H_i(\calP;\mathfrak{F})]\in G_0(RG)$ the \textit{homology representation} of $G$ on $\calP$ with coefficients in $\mathfrak{F}\in\mbox{mod-}R\PG$, and $\st_{\calP}(G)=\H_{\calP}(G;\r)-[R]$ the \textit{generalized Steinberg representation} of $G$ on $\calP$. 

We shall abbreviate $\H_{\calP}(G;\r)$ as $\H_{\calP}(G)$.
\end{definition}

Note that in \cite{CuRe}, $\bigoplus_{i}\H_i(\calP;\r)\in \mbox{mod-}RG$ is called the homology representation of $G$ on $\calP$. However, it seems more natural to go with the signs, from what we have seen. Proposition 5.2 may be rephrased as follows.

\begin{corollary} Let $G$ be a finite group, $p$ be a prime that divides $|G|$, $\calP$ be a finite $G$-poset, $R$ be a field, and $M$ be a right $RG$-module. Then
$$
lk_{\pi}([\kappa_M])=[\sfC_*(\calP;\r)][M]=\H_{\calP}(G)[M]\in G_0(RG).
$$
It follows that 
$$
lk_{\pi}r_{\pi}([M])=lk_{\pi}([\kappa_M])=[M]+\st_{\calP}(G)[M].
$$
or
$$
(lk_{\pi}r_{\pi}-\Id)([M])=\st_{\calP}(G)[M].
$$
Particularly, when $\calP=\calS_p$, the operator $lk_{\pi}r_{\pi}-\Id : G_0(RG) \to G_0(RG)$ maps $[R]$ to $\st_p(G)$, the generalized Steinberg module at $p$.
\end{corollary}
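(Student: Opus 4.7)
The plan is to assemble the corollary directly from the formula $\L_{\pi}\R_{\pi}M\cong\sfC_*(\calP;\underline{R})\otimes_R M$ (recalled in the excerpt from \cite{Xu2}), together with the definition of $lk_{\pi}$ as an alternating sum of higher limits and the ring structure on $G_0(RG)$. I expect the write-up to be short since Proposition 5.2 already contains the essential content; the corollary is a convenient repackaging, and the last sentence is a specialization.

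First I would take classes in $G_0(RG)$. Since $R$ is a field, for any bounded complex $\sfC_*$ of $RG$-modules the class $[\sfC_*]$ equals both $\sum_i (-1)^i[\sfC_i]$ and $\sum_i(-1)^i[\H_i(\sfC_*)]$. Applying this to $\sfC_*(\calP;\kappa_M)$ and using the isomorphism $\sfC_*(\calP;\kappa_M)\cong\sfC_*(\calP;\underline R)\otimes_R M$ (diagonal action, tensoring over the field $R$ is exact so it commutes with homology), I obtain
\[
 lk_{\pi}([\kappa_M])=\sum_i(-1)^i[\H_i(\calP;\kappa_M)]=[\sfC_*(\calP;\underline R)\otimes_R M].
\]
Next I use the tensor structure on $G_0(RG)$: over a field, $-\otimes_R -$ with diagonal $G$-action is exact in each variable and descends to the ring multiplication on $G_0(RG)$, so $[\sfC_*(\calP;\underline R)\otimes_R M]=[\sfC_*(\calP;\underline R)]\cdot[M]=\H_{\calP}(G)[M]$. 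This establishes the first displayed chain of equalities.

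For the second block of equalities, I would note that $r_{\pi}([M])=[\kappa_M]$ by definition, so $lk_{\pi}r_{\pi}([M])=lk_{\pi}([\kappa_M])=\H_{\calP}(G)[M]$ by what was just shown. Substituting $\H_{\calP}(G)=[R]+\st_{\calP}(G)$ from Definition 5.3 and using that $[R]$ is the multiplicative identity of $G_0(RG)$ (since $M\otimes_R R\cong M$), one has $\H_{\calP}(G)[M]=[M]+\st_{\calP}(G)[M]$, whence $(lk_{\pi}r_{\pi}-\Id)([M])=\st_{\calP}(G)[M]$. The final assertion is the special case $\calP=\calS_p$, $M=R$, using $\st_{\calS_p}(G)=\st_p(G)$ by definition.

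The only point that requires care is the identification of $[\sfC_*(\calP;\underline R)\otimes_R M]$ with the product $\H_{\calP}(G)\cdot[M]$ in $G_0(RG)$, which I would justify briefly by exactness of $-\otimes_R M$ over a field and compatibility of the diagonal $G$-action with the tensor triangulated structure; the rest is bookkeeping in the Grothendieck ring.
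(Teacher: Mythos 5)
Your proof is correct and follows exactly the route the paper has in mind: the paper flags Corollary 5.4 as a rephrasing of the discussion around Proposition 5.2, namely the isomorphism $\L_{\pi}\R_{\pi}M\cong\sfC_*(\calP;\r)\otimes_R M$ from \cite{Xu2}, the description of $lk_{\pi}$ as an alternating sum, the ring multiplication on $G_0(RG)$, and the substitution $\H_{\calP}(G)=[R]+\st_{\calP}(G)$ from Definition 5.3. Your write-up simply makes explicit the bookkeeping that the paper leaves implicit, so there is no substantive difference.
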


It rings a bell when one compares the last statement with a property of the Alvis-Curtis duality, a self-map on $G_0(\CC G)$ that exchanges $[\CC]$ with $\st_G$ when $G$ is a finite group of Lie type, see Theorem 6.2.

We record the following observations.

\begin{corollary} Let $\calP$ be a finite $G$-poset.
\begin{enumerate}
\item If $\calP$ is connected and has vanishing homology, then both $r_{\pi}$ and $r'_{\pi}$ are injective. Meanwhile both $lk_{\pi}$ and $lk'_{\pi}$ are surjective.

\item If $\st_{\calP}(G)$ is virtual projective, then $r'_{\pi}$ is injective. Meanwhile $lk'_{\pi}$ is surjective.
\end{enumerate}
\end{corollary}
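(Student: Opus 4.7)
The plan is to leverage the identity established in Corollary 5.4: for every finitely generated $RG$-module $M$,
$$lk_{\pi}\circ r_{\pi}([M])=[M]+\st_{\calP}(G)\cdot[M]\quad\text{in }G_0(RG).$$
Reading this as the statement that $lk_{\pi}\circ r_{\pi}$ is multiplication by $\H_{\calP}(G)=[R]+\st_{\calP}(G)$ in the Grothendieck ring, each conclusion will reduce to verifying that $\st_{\calP}(G)$ (or its image in the stable quotient) vanishes; once such an equality $lk\circ r=\Id$ is in hand, injectivity of the right-hand factor and surjectivity of the left-hand factor are automatic.

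For part (1), I would first observe that if $\calP$ is connected and has vanishing (reduced) homology, then $\H_0(\calP;\r)\cong R$ with trivial $G$-action while $\H_i(\calP;\r)=0$ for $i\geq 1$. Hence $\H_{\calP}(G)=[R]$ and $\st_{\calP}(G)=0$ already in $G_0(RG)$, so $lk_{\pi}\circ r_{\pi}=\Id_{G_0(RG)}$, which forces $r_{\pi}$ to be injective and $lk_{\pi}$ to be surjective. The same identity survives the canonical surjection $G_0(RG)\to G^{st}_0(RG)$, yielding $lk'_{\pi}\circ r'_{\pi}=\Id$, so the stable assertions follow at once.

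For part (2), the approach is to transport the identity of Corollary 5.4 into $G^{st}_0(RG)$. This descent is legitimate because, as recorded in Section 4, the functors $\R_{\pi}$ and $\L_{\pi}$ preserve finite projective dimension (so $r_{\pi}$ and $lk_{\pi}$ descend to $r'_{\pi}$ and $lk'_{\pi}$), and because $G_0(D^b(\mbox{proj-}RG))$ is an ideal of $G_0(RG)$ (so the canonical surjection is a ring homomorphism). The virtual-projectivity hypothesis means precisely that $\st_{\calP}(G)$ lies in the image of the Cartan map, hence becomes zero in $G^{st}_0(RG)$; consequently $\st_{\calP}(G)\cdot[M]=0$ there for every $[M]$, giving $lk'_{\pi}\circ r'_{\pi}=\Id$ on $G^{st}_0(RG)$ and the desired injectivity of $r'_{\pi}$ together with the surjectivity of $lk'_{\pi}$. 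The only genuinely non-formal step is verifying this descent, but the ring-theoretic facts recalled in Section 4 make it routine; everything else is a formal consequence of possessing a one-sided inverse.
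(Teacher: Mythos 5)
Your proposal is correct and matches the approach the paper intends: both parts follow by reading Corollary 5.4 as the statement that $lk_{\pi}r_{\pi}-\Id$ is multiplication by $\st_{\calP}(G)$, observing that this class vanishes in $G_0(RG)$ under hypothesis (1) (since $\H_{\calP}(G)=[R]$ for a connected poset with no higher homology) and in $G_0^{st}(RG)$ under hypothesis (2) (since virtual-projectivity puts it in the image of the Cartan map), and then noting that a one-sided inverse forces the injectivity/surjectivity claims, with the stable versions following by descent along the canonical surjections.
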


It will be interesting to compute $lk_{\pi}([S])$ for each simple $R\PG$-module. Regarded as a functor, $S$ must be atomic, in the sense there exists an object $x\in\Ob(\PG)$ such that $S_{x,V}(y)=0$ for all $x\not\cong y$ in $\Ob(\PG)$. Moreover, as a functor we must have $S(z)\cong S(x)$, if $z\cong x$, as $RG_x$-modules. Here $G_x=\Aut_{\PG}(x)$ is the stabilizer of $x$ in $G$. Using techniques developed by Bouc, Oliver, Quillen, Thevenaz, Webb et al \cite{Sm}, we deduce that
$$
\begin{array}{ll}
lk_{\pi}([S])&=\sum_i(-1)^i[\H_i(\calP;S)]\\
&\\
&=\sum_i(-1)^i[\sum_{y\cong x}\H_i(\calP_{\ge y},\calP_{> y};S(y))]\\
&\\
&=\sum_i(-1)^i[\H_i(\calP_{\ge x},\calP_{> x};S(x))]\uparrow^G_{G_x}\\
&\\
&=\sum_i(-1)^i([\H_i(\calP_{\ge x},\calP_{> x};k)][S(x)])\uparrow^G_{G_x}\\
&\\
&=\sum_i(-1)^i([\tilde{\H}_{i-1}(\calP_{> x};k)][S(x)])\uparrow^G_{G_x}.
\end{array}
$$

The last equality is true because $\calP_{\ge x}$ has an initial object and hence vanishing homology. Note that if $x$ is maximal, then $\calP_{>x}=\emptyset$, which has $k$ as its -1 degree reduced homology and zero elsewhere. It means when $x$ is maximal, $lk_{\pi}([S])=[S(x)]\uparrow^G_{G_x}$.

Since for each simple module $S$, we always have $lk_{\pi}([S])\in G_0(RG_x)\uparrow^G_{G_x}$ for some $x$, when $lk_{\pi}$ (or $lk'_{\pi}$ if applicable) is surjective, we get the following ``induction theorems''. 

\begin{corollary} Let $\calP$ be a finite $G$-poset.
\begin{enumerate}
\item If $\calP$ is connected and has vanishing homology, then $$
G_0(RG)=\sum_{[x]\subset\Ob(\PG)}G_0(RG_x)\uparrow^G_{G_x}.
$$
and
$$
G_0^{st}(RG)=\sum_{[x]\subset\Ob(\PG)}G_0^{st}(RG_x)\uparrow^G_{G_x}.
$$

\item If $\st_{\calP}(G)$ is virtual projective, then 
$$
G_0^{st}(RG)=\sum_{[x]\subset\Ob(\PG)}G_0^{st}(RG_x)\uparrow^G_{G_x}.
$$
\end{enumerate}
\end{corollary}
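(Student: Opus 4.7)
The plan is to combine the surjectivity of $lk_\pi$ (resp.\ $lk'_\pi$) from Corollary~5.4 with the explicit expression for $lk_\pi([S])$ on simples given immediately before the statement. Since $G_0(R\PG)$ is generated as an abelian group by the classes of simple $R\PG$-modules, and since $G_0^{st}(R\PG)$ is a quotient of $G_0(R\PG)$ via the localization sequence of Section~4, it suffices to bound the image of $lk_\pi$ on simple modules.

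For a simple $R\PG$-module $S$ atomic at an object $x$ with automorphism group $G_x=\Aut_{\PG}(x)$, the formula
$$
lk_\pi([S])=\sum_i(-1)^i([\tilde{\H}_{i-1}(\calP_{>x};k)][S(x)])\uparrow^G_{G_x}
$$
already derived in the excerpt displays $lk_\pi([S])$ as an element of $G_0(RG_x)\uparrow^G_{G_x}$. Summing over isomorphism-class representatives of objects of $\PG$ yields
$$
\mbox{Im}(lk_\pi)\subseteq\sum_{[x]\subset\Ob(\PG)}G_0(RG_x)\uparrow^G_{G_x},
$$
and the reverse inclusion is trivial. Under hypothesis (1), Corollary~5.4 supplies surjectivity of both $lk_\pi$ and $lk'_\pi$, so both the ordinary and stable equalities follow at once. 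Under hypothesis (2), only $lk'_\pi$ is known to be surjective, giving only the stable version.

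The one bookkeeping point is the stable version: one needs to know that $lk'_\pi$ applied to a simple-class in $G_0^{st}(R\PG)$ lands in $\sum_{[x]}G_0^{st}(RG_x)\uparrow^G_{G_x}$. This is automatic from the compatibility of $lk_\pi$ and $lk'_\pi$ with the quotient $G_0(R\PG)\twoheadrightarrow G_0^{st}(R\PG)$: the class $lk'_\pi([S])$ is simply the image of $lk_\pi([S])$ modulo the ideal of virtual projectives, and the displayed formula therefore transports unchanged. What might at first look like the real obstacle is thus resolved by the construction of $\L'_\pi$ from $\L_\pi$ in Section~4, and the two induction theorems follow without further input.
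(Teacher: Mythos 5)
Your argument is correct and is precisely the argument the paper has in mind: the sentence immediately preceding the corollary ("Since for each simple module $S$, we always have $lk_{\pi}([S])\in G_0(RG_x)\uparrow^G_{G_x}$ for some $x$, when $lk_{\pi}$ (or $lk'_{\pi}$ if applicable) is surjective...") combines the surjectivity from Corollary 5.4 with the formula for $lk_{\pi}$ on atomic simples, just as you do. Your added remark on the compatibility of $lk_{\pi}$ and $lk'_{\pi}$ through the quotient by virtual projectives (and that $\uparrow^G_{G_x}$ sends projectives to projectives, so the diagram with the stable Grothendieck groups commutes) is a worthwhile bookkeeping point that the paper leaves tacit, but the route is the same.
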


\section{Orbit categories and the Alvis-Curtis duality}

Motivated by Section 5, we try to give a new construction of the Alvis-Curtis duality, using homology with local coefficients. Throughout this section, we suppose $G$ is a finite group with a split BN-pair, of rank $|S|>1$ and characteristic $p$. We do need the orbit category $\calO_{\calB_p}(G)$ to provide an appropriate coefficient system, introduced by Curtis \cite{Cu}. The point is that the automorphism groups in the orbit category are exactly the Levi subgroups of $G$. In order to proceed, we will see that both the left and right Kan extensions need to be involved.

There are the Harish-Chandra restriction and induction relating $RG$-mod with $RL$-mod, for $L$ a Levi subgroup. It may be pictured as the lower left diagram.
$$
\xymatrix{V \ar[dr]&&&\calK \ar[dr]&&\\
&P=N_G(V)\ar[dr]\ar[dl]&&&\calB_p\rtimes G\ar[dr]^{\rho} \ar[dl]_{\pi}&\\
G&&L&G&&\calO_{\calB_p}(G)}
$$

\begin{example} If we fix some standard parabolic subgroup $P_I$ with Levi decomposition $L_I\ltimes V_I$, then $\calP_I=\{{}^gV_I\bigm{|} g\in G\}$ is a $G$-subposet of $\calB_p$. (If one wants to be consistent, $\calP_I$ is isomorphic to $\{gP_I\bigm{|} g\in G\}$ with left multiplications.) We see that $\calP_I\rtimes G\simeq P_I$ and the corresponding orbit category $\calO_I$  is equivalent to $L_I$, with a natural functor $\calP_I\rtimes G \to \calO_I$. In this way, any $RL_I$-module $N$ determines a $G$-presheaf $\mathfrak{N}$ on $\calO_I$. It inflates/restricts to a $G$-presheaf $\mathfrak{N}$ on $\calP_I$. One can compute directly and show that $\H_*(\calP_I;\mathfrak{N})=\H_0(\calP_I;\mathfrak{N})$ is exactly the Harish-Chandra induction of $N$. Again as in the classical case, one can use cohomology $\H^0(\calP_I;\mathfrak{N})\cong\H_0(\calP_I;\mathfrak{N})$ to define the Harish-Chandra induction. 
\end{example}

Replacing the group extension by a category extension, the diagram on the right becomes a categorical version of the left whose automorphism groups are exactly the Levi subgroups of $G$. We wish to lift the previously mentioned Harish-Chandra restriction and induction to functors relating representations of $G$ and $\calO_{\calB_p}(G)$, as an ``amalgam'' of Levi subgroups. The functor $\calK$ maps each object $V\in\Ob(\calB_p\rtimes G)$ to $V$ itself, and can be identified with the groupoid consisting of all unipotent radicals. In this case however the homology of $\calB_p$ does not vanish at all positive dimensions.

From the canonical functors $G {\buildrel{\pi}\over{\leftarrow}} \calB_p\rtimes G {\buildrel{\rho}\over{\to}} \calO_{\calB_p}(G)$, we obtain functors between module categories
$$
{\rm T}_G=RK_{\rho}Res_{\pi} : \mbox{mod-}RG {\buildrel{Res_{\pi}}\over{\longrightarrow}} \mbox{mod-}R\calB_p\rtimes G {\buildrel{RK_{\rho}}\over{\longrightarrow}} \mbox{mod-}R\calO_{\calB_p}(G),
$$
and
$$
{\rm I}_G=LK_{\pi}Res_{\rho} : \mbox{mod-}RG {\buildrel{LK_{\pi}}\over{\longleftarrow}} \mbox{mod-}R\calB_p\rtimes G {\buildrel{Res_{\rho}}\over{\longleftarrow}} \mbox{mod-}R\calO_{\calB_p}(G).
$$
By the adjunctions between the restriction and the Kan extensions, we readily verify that 
$$
\Hom_{RG}({\rm I}_G(\mathfrak{F}),N)\cong\Hom_{R\calO_{\calB_p}}(\mathfrak{F},{\rm T}_G(N))
$$
which means that ${\rm I}_G$, called the \textit{integrated Harish-Chandra induction}, is the left adjoint of ${\rm T}_G$, called the \textit{integrated Harish-Chandra restriction}. 

Let us exploit the basic properties of these two new functors that we have just introduced. To understand ${\rm T}_G$, we must calculate $RK_{\rho}$. In fact, as we have seen in \cite{Xu1}, since $\calB_p\rtimes G \to \calO_{\calB_p}(G)$ is part of a category extension, for every $\mathfrak{F}\in$ mod-$R\calB_p\rtimes G$,
$$
RK_{\rho}(\mathfrak{F})=\colim_{\calK}\mathfrak{F}\cong\H^0(\calK;\mathfrak{F}).
$$
At an object $V\in\Ob\calB_p$, $RK_{\rho}(\mathfrak{F})(V)=\H^0(V,\mathfrak{F}(V))=\mathfrak{F}(V)^V$. If there is a morphism $V \to V'$, one easily defines $\mathfrak{F}(V')^{V'} \to \mathfrak{F}(V)^{V}$. 

In the rest of this section, we shall be mainly working over $R=\CC$, but many constructions are valid over a ring $R$ in which $p$ is invertible. Given a finite EI category $\calC$ \cite{We}, every right $\CC\calC$-module is of finite projective dimension. Thus
$$
G_0(\CC\calC)=G_0(D^b(\mbox{mod-}\CC\calC))=G_0(D^b(\mbox{proj-}\CC\calC))\cong  K_0(\CC\calC).
$$
For consistency, we shall stick with the notation $G_0(\CC\calC)$.

Under the circumstance, the functor $RK_{\rho}$ is exact, and hence so is ${\rm T}_G$. It follows that ${\rm I}_G$ is right exact. The induced map $rk_{\rho} : G_0(\CC\calB_p\rtimes G) \to G_0(\CC\calO_{\calB_p}(G))$ is naturally given by 
$$
[\frakN] \mapsto [RK_{\rho}\frakN].
$$

The two functors ${\rm I}_G$ and ${\rm T}_G$ induce derived functors $\I_G$ and $\T_G$. Since ${\rm T}_G$ is exact, the two functors $\I_G$ and $\T_G$ are adjoint to each other with a counit $\Theta : \I_G\T_G \to \Id_{D^b(\CC\calO_{\calB_p})}$. 

The two functors also induce maps between the Grothendieck groups
$$
i_G : G_0(\calO_{\calB_p}(G)) \to G_0(\CC G);\ \ \  [\mathfrak{F}] \mapsto \sum_i(-1)^i[\H_i(\calB_p;\mathfrak{F})],
$$
and
$$
t_G : G_0(\CC G) \to G_0(\CC\calO_{\calB_p}(G));\ \ \ [M] \mapsto [\H^0(\calK;\kappa_M)].
$$

Cabanes-Rickard \cite{CaRi} proved that the Alvis-Curtis duality is the consequence of an existing category self-equivalence on $D^b(\CC G)$, and Okuyama \cite{Ca} later on lifted the equivalence to the homotopy category, following a conjecture of Cabanes-Rickard. We want to show that the counit $\Theta$ leads to their construction. To this end, we recall that they introduced a $\CC G$-bimodule complex $X$ such that
$$
X^i=\bigoplus_{|I|=|S|-i-1}\CC G\bar{V_I}\otimes_{\CC P_I}\bar{V_I}\CC G,
$$
Here $\bar{V_I}$ is the sum of elements of $V_I$, multiplied by $|V_I|^{-1}$. Especially $X^{|S|}=\CC G$ is set at degree $-1$. Cabanes and Rickard introduced an equivalence 
$$
-\otimes_{\CC G}X : D^b(\CC G)\to D^b(\CC G),
$$
which on a right $\CC G$-module $M$, at degree $i$, is
$$
M\otimes_{\CC G}X^i=\bigoplus_{|I|=|S|-i-1}M\bar{V_I}\otimes_{\CC P_I}\bar{V_I}\CC G\cong \bigoplus_{|I|=|S|-i-1}M\bar{V_I}\otimes_{\CC L_I}\bar{V_I}\CC G.
$$
Be aware that, Cabanes-Rickard worked on an coefficient ring where $p$ is invertible, and considered \textit{left} modules. They showed that $M\otimes_{\CC G}X$ is isomorphic to $\widetilde{\sfC}_*(\Delta;\mathfrak{F}_M)$ (the augmented chain complex), where the $G$-presheaf $\mathfrak{F}_M$ on $\Delta$ dedined by $\mathfrak{F}_M(P_I)=M^{V_I}$ \cite{Cu, Sm}.

\begin{theorem} Suppose $G$ is a finite group with split BN pair, of type $(W,S)$ such that $|S|>1$. Then
$$
Cone(\Theta_-)\cong -\otimes_{\CC G} X,
$$
and consequently
$$
D_G:=i_Gt_G-\Id : G_0(\CC G) \to G_0(\CC G)
$$
is the Alvis-Curtis duality, up to a sign.
\end{theorem}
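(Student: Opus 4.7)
The plan is to identify the mapping cone of the counit $\Theta_M : \I_G\T_G M \to M$ with the Cabanes--Rickard complex $M \otimes_{\CC G} X$, and then to pass to Grothendieck groups.

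First, I would make $\I_G\T_G M$ concrete. Using $RK_\rho \mathfrak{F}\cong\colim_{\calK}\mathfrak{F}$, the presheaf $\T_G M=RK_\rho\kappa_M$ sends each $V$ to $M^V$, so $Res_\rho\T_G M$ is the $G$-presheaf on $\calB_p$ given by $V\mapsto M^V$. Since $\T_G$ is exact (as noted just above the theorem), passing to the derived category yields
$$
\I_G\T_G M\;\cong\;\L_\pi Res_\rho \T_G M\;\cong\;\sfC_*(\calB_p;\,V\mapsto M^V).
$$
Because $\calB_p\cong\sd\Delta$, Proposition 2.4 gives a chain-homotopy equivalence of $\CC G$-complexes
$$
\sfC_*(\calB_p;\,V\mapsto M^V)\;\simeq\;\sfC_*(\Delta;\mathfrak{F}_M),\quad \mathfrak{F}_M(P_I)=M^{V_I},
$$
where $\mathfrak{F}_M$ is precisely Cabanes--Rickard's coefficient system.

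Second, I would identify $\Theta_M$ under this equivalence with the canonical augmentation $\sfC_*(\Delta;\mathfrak{F}_M)\to M$ summing the inclusions $M^{V_I}\hookrightarrow M$ over vertices of $\Delta$. Unwrapping $\Theta$ as the composite of the two counits $\L_\pi Res_\rho RK_\rho Res_\pi M\to\L_\pi Res_\pi M\to M$, the first arrow at each $V$ is the inclusion $M^V\hookrightarrow M$ and the second is the standard augmentation, so the composite is the augmentation of $\sfC_*(\calB_p;V\mapsto M^V)$ to $M$. By the formulas recalled before the theorem,
$$
M\otimes_{\CC G}X^i\;\cong\;\bigoplus_{|I|=|S|-i-1}M^{V_I}\uparrow^G_{L_I}
$$
for $i\geq 0$ matches $\sfC_i(\Delta;\mathfrak{F}_M)$ term by term (using $L_I\cong P_I/V_I$ and that $V_I$ acts trivially on $M^{V_I}$), while $M\otimes_{\CC G}X^{-1}=M$ is the augmentation target. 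Matching differentials and signs gives $Cone(\Theta_M)\cong M\otimes_{\CC G}X$ naturally in $M$, proving the first assertion. The Grothendieck-group statement then follows from the triangle $\I_G\T_G M\to M\to Cone(\Theta_M)\to$:
$$
[M\otimes_{\CC G}X]=[Cone(\Theta_M)]=[M]-[\I_G\T_G M]=-\bigl(i_Gt_G-\Id\bigr)[M]=-D_G[M],
$$
and since Cabanes--Rickard proved that $-\otimes_{\CC G}X$ is a self-equivalence of $D^b(\CC G)$ whose induced operator on $G_0(\CC G)$ is the Alvis--Curtis duality, $D_G$ agrees with that duality up to the sign $-1$.

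The main obstacle will be the second step: matching the abstract counit $\Theta_M$ with the geometric augmentation on the nose. This requires naturally tracking both adjunction counits $\L_\pi Res_\pi\to\Id$ and $Res_\rho RK_\rho\to\Id$, verifying that the chain-homotopy equivalence of Proposition 2.4 is compatible with the augmentation to $M$, and reconciling the homological grading of $\sfC_*$ with the cohomological grading of $X$ (in particular the degree-$-1$ copy of $\CC G$). This bookkeeping, together with the sign appearing in the alternating-sum formula for the Steinberg module, is precisely what produces the ``up to a sign'' in the final statement.
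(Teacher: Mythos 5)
Your proposal is correct and follows essentially the same route as the paper: identify $\T_G M$ object-wise as $V\mapsto M^V$, use Proposition~2.4 to pass from $\sfC_*(\calB_p;-)$ to $\sfC_*(\Delta;\mathfrak{F}_M)$, recognize $\Theta_M$ as the augmentation so that $Cone(\Theta_M)\cong\widetilde{\sfC}_*(\Delta;\mathfrak{F}_M)$, invoke Cabanes--Rickard's identification of this with $M\otimes_{\CC G}X$, and read off the Grothendieck-group statement from the defining triangle. The only difference is that you spell out the factorization of $\Theta_M$ through the two unit/counit maps and the term-by-term matching with $X^i$ in more detail than the paper, which simply cites these identifications.
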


\begin{proof}  We note that the $G$-presheaf $\H_0(\calK;\kappa_M)$ on $\calB_p$ is exactly $\tau^*\mathfrak{F}_M$. By Proposition 2.2, for each $\CC G$-module $M$, $\Theta_M : \I_G\T_G(M) \to M$ is identified with the augmented chain complex
$$
\sfC_*(\calB_p;\mathfrak{F}_M) \to M,
$$
and $Cone(\Theta_M)\cong Cone(\sfC_*(\Delta;\mathfrak{F}_M) \to M)=\widetilde{\sfC}_*(\Delta;\mathfrak{F}_M)$, which is isomorphic to $M\otimes_{\CC G}X$.

The second statement actually follows from the above, as well as Cabanes-Rickard. We can also establish it by explicitly computation, as in Example 5.1.
\end{proof}

We are unable to provide an intrinsic proof that $Cone(\Theta_-)$ is a category equivalence, although we suspect that it could come in one way or another from the adjunction between $\I_G$ and $\T_G$.

\begin{remark} The quasi-inverse of $-\otimes_{\CC G}X$ is given by $-\otimes_{\CC G}X^{\vee}$, where $X^{\vee}$ is the linear dual of the $\CC G$-bimodule complex. Let $M^{\circ}$ denote the contragredient module of $M$. Following the formula \cite[Corollary 2.2]{CaRi} that 
$$
M\otimes_{\CC G}X^{\vee} \cong (M^{\circ}\otimes_{\CC G}X)^{\circ},
$$
we see that $M\otimes_{\CC G}X^{\vee}\cong\widetilde{\sfC}_*(\calB_p;\mathfrak{F}_{M^{\circ}})^{\circ}\cong\widetilde{\sfC}^*(\calB_p;\mathfrak{F}_{M^{\circ}})$. It is interesting to observe that $-\otimes_{\CC G}X^{\vee}$ is realized by the composite the the lower three functors in the diagram
$$
\xymatrix{D^b(\CC G) \ar[rr]^{-\otimes_{\CC G} X} & & D^b(\CC G) \ar[d]^{(-)^{\circ}} \\
D^b(\CC G) \ar[u]^{(-)^{\circ}} & & D^b(\CC G) \ar[ll]^{-\otimes_{\CC G} X}}
$$
Since the horizontal functors are covariant and the vertical ones are contravariant, the contravariant functor 
$$
(-\otimes_{\CC G}X)^{\circ}\cong Cone(\Theta_-)^{\circ}\cong\widetilde{\sfC}^*(\calB_p;\mathfrak{F}_-)
$$
deserves to be called the Alvis-Curtis duality similar to the comments in \cite[Section 7.1]{CaRi}. Along with the cohomological constructions of the classical and Harish-Chandra inductions, it hints that cohomology may be more reasonable than homology to formulate the inductions and the Alvis-Curtis duality.
\end{remark}

\section{On Deligne-Lusztig induction}

To be consistent, we speculate on the case of Deligne-Lusztig induction. In this situation, one has to consider infinite but small $G$-categories, as well as the $l$-adic $G$-equivariant sheaves. A $l$-adic sheaf is a limit of constructible sheaves which requires further considerations and which we do not discuss here. Due to the different nature of presheaves and sheaves, only Kan extensions are not sufficient to handle the situation.

Let $X$ be a scheme, separated and of finite type over an algebraically closed field, say $\overline{\mathbb{F}_q}$, $q=p^m$, and $X_{\rm et}$ to be the small site on it \cite[Expos\'e VII]{AGV}, whose underlying category, still written as $X_{\rm et}$, is small. If furthermore $X$ admits a $G$-action, $X_{\rm et}$ is a $G$-category. Suppose $\mathfrak{F}$ is a $G$-equivariant sheaf of $R$-modules on $X_{\rm et}$. The resulting \'etale cohomology groups $\H^i(X;\mathfrak{F}):=\H^i(X_{\rm et};\mathfrak{F})$, $ i\ge 0$, are $RG$-modules.

Suppose $G$ is a finite group of Lie type, in defining characteristic $p$ and $X$ is a Deligne-Lusztig variety \cite{DeLu, Sr} that is acted on by $G$ and a Levi subgroup $L$. Cohomology with compact support $\H^*_c(X;\mathfrak{F})$ is needed. By a theorem of Nagata, there is a ``compactification'' $\widetilde{X}$ of $X$, a scheme proper over $k$, along with an open immersion $j: X \to \widetilde{X}$. Let $\mathfrak{F}$ be a torsion sheaf, there is an extension by zero $j_!\mathfrak{F}$ on $\widetilde{X}_{\rm et}$, and $\H^i_c(X;\mathfrak{F}):=\H^i(\widetilde{X};j_!\mathfrak{F})$. If $R$ (e.g. $\mathbb{Z}/l^n\mathbb{Z}$, prime $l\ne p$) is a finite commutative ring and $\mathfrak{F}$ is constructible, then $\H^i_c(X;\mathfrak{F})$ is finite for each $i$ and vanishes at $i > 2d$. Using Poincar\'e duality \cite{Mi}, as $RG$-modules $\H^i_c(X;\mathfrak{F})\cong\Hom_R(\H^{2d-i}(X;\check{\mathfrak{F}}(d)),R)$, where $d$ is the dimension of $X$.

Let $\calC$ be a small site with $G$-action. The $G$-equivariant sheaves on $\calC$ form an Abelian category ${\rm Sh}_G(\calC)$,  having enough injectives (\cite{BeLu, De}). The category mod-$R\calC\rtimes G$ now stands for the category of $G$-equivariant presheaves/functors on $\calC$, and many constructions in Section 4 still work. Note that the forgetful functor $i : {\rm Sh}_G(\calC)\hookrightarrow\mbox{mod-}R\calC\rtimes G$ is only \textit{left exact}. In general we have
$$
\xymatrix{{\rm Sh}(\calC) \ar[r]^{i} & \mbox{mod-}\calC \ar[rr]^{RK_{\pi}} & & \mbox{mod-}R\\
{\rm Sh}_G(\calC) \ar[r]_{i} \ar[u]^{Res_{\iota}} & \mbox{mod-}R\calC\rtimes G \ar[rr]_{RK_{\pi}} \ar[u]^{Res_{\iota}} & & \mbox{mod-}RG={\rm Sh}_G(pt) \ar[u]_{Res_{\iota}}}
$$
The vertical (forgetful) functors are exact, and they have adjoint functors (induction=co-induction). The adjoint functors of $RK_{\pi}$ and $i$, $Res_{\pi}$ and the shifification $\sharp$, are exact. Note that the sheafification of a $G$-equivariant presheaf is always a $G$-equivariant sheaf.

Since $G$ is \textit{discrete}, the equivariant derived category $D^b_G(\calC)$ is simply defined as $D^b({\rm Sh}_G(\calC))$ \cite{BeLu}. Denote by $D^b_{G,c}(\calC)$ the (thick) triangulated subcategory of $D^b_G(\calC)$ that consists of constructible complexes. One has $D^b_{G,c}(pt)=D^b_G(pt)=D^b(RG)$.

Let $\calC=X_{\rm et}$ where $X$ is a Deligne-Lusztig variety as above. Since $\Id_X: X \to X$ is a final object in $X_{\rm et}$, $RK_{\pi}=\colim_{X_{\rm et}}=\Hom_{X_{\rm et}}(\r,-)$ is exact and the global section functor 
$$
\Gamma_{X}=RK_{\pi}\circ i: {\rm Sh}_G(X_{\rm et}) \hookrightarrow \mbox{mod-}R X_{\rm et}\rtimes G \to \mbox{mod-}R
$$ 
is identified with $\Hom_{X_{\rm et}}(\r,-)$ for $\Gamma(\mathfrak{G})=\mathfrak{G}(\Id_X)=\Hom_{X_{\rm et}}(\r,\mathfrak{G})$, where $\r$ is the constant presheaf/functor on $X_{\rm et}$ and $\mathfrak{G}\in{\rm Sh}_G(X_{\rm et})$. Thus given a torsion sheaf $\mathfrak{F}\in{\rm Sh}_G(X_{\rm et})$, $\H^{2d-i}(X;\check{\mathfrak{F}}(d))$ is computed by an injective resolution $\mathfrak{I}_*$ of $\check{\mathfrak{F}}(d)\in{\rm Sh}_G(X_{\rm et})$, such that
$$
\Hom_{{\rm Sh}(X_{\rm et})}(\r^{\sharp},\mathfrak{I}_*)\cong\Hom_{X_{\rm et}}(\r,i(\mathfrak{I}_*))\cong i(\mathfrak{I}_*)(\Id_X).
$$
Since $G$ is contained in the stabilizer of $\Id_X\in\Ob X_{\rm et}$, the complex is obviously an $RG$-complex. Moreover, since each $i(\mathfrak{I}_t), t\ge 0,$ is an injective presheaf/functor over $X_{\rm et}\rtimes G$, it has to be injective on evaluation at every object $\phi : Y\to X$, as an $RG_{\phi}$-module. However a group algebra is self-injective. It means that $i(\mathfrak{I}_t)(\phi)$ is always a projective $RG_{\phi}$-module, and particularly $i(\mathfrak{I}_*)(\Id_X)$ is a complex of projective $RG$-modules. If $R=\mathbb{Z}/l^n\mathbb{Z}$ and $\mathfrak{F}$ is constructible, $i(\mathfrak{I}_*)(\Id_X)$ is isomorphic to a perfect complex in $D^b(RG)$, see for instance \cite[P.68]{Sr}. 

\begin{theorem} Let $G$ be a finite group of Lie type in defining characteristic $p$, $X$ be a Deligne-Lusztig variety and $R=\mathbb{Z}/l^n\mathbb{Z}$ for $l$ a prime different from $p$. Then the right Kan extension along $\pi : X_{\rm et}\rtimes G \to G$ induced a functor
$$
\mathbb{R}^i\Gamma_X : D^b_{G,c}(X_{\rm et}) \to D^b(RG),
$$
which gives rise to a map
$$
\gamma_X: G_0(D^b_{G,c}(X_{\rm et})) \to G_0(D^b(RG))
$$ 
sending $\r^{\sharp}$ to $\H^*(X;\check{\r^{\sharp}}(d))=\Hom_R(\H^*_c(X;\r^{\sharp}),R)$. Here $\r$ is the trivial $RX_{\rm et}\rtimes G$-module.
\end{theorem}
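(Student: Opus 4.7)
The plan is to leverage the setup assembled just before the theorem, where $\Gamma_X$ has been identified with $\Hom_{X_{\rm et}}(\r,i(-))$ and injective objects of ${\rm Sh}_G(X_{\rm et})$ have been shown to evaluate at the terminal object $\Id_X\in\Ob X_{\rm et}$ to projective $RG$-modules. First I would construct the derived functor $\mathbb{R}\Gamma_X$: since ${\rm Sh}_G(X_{\rm et})$ has enough injectives, the left exact $\Gamma_X$ admits a right derived functor, and for $\mathfrak{F}^{\bullet}\in D^b({\rm Sh}_G(X_{\rm et}))$ a bounded-below $G$-equivariant injective resolution $\mathfrak{I}^{\bullet}$ gives
$$
\mathbb{R}\Gamma_X(\mathfrak{F}^{\bullet})\simeq i(\mathfrak{I}^{\bullet})(\Id_X),
$$
each term of which is a projective $RG$-module by the cited observation. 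Functoriality up to quasi-isomorphism is standard.

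Second, I would cut the image down to $D^b(RG)$. For $R=\ZZ/l^n\ZZ$ with $l\ne p$ and $\mathfrak{F}^{\bullet}\in D^b_{G,c}(X_{\rm et})$, the \'etale cohomology sheaves of $X$ vanish outside degrees $[0,2d]$ and are finite in each degree. Truncating $\mathfrak{I}^{\bullet}$ appropriately and invoking the argument sketched in \cite[P.68]{Sr} produces a perfect complex of $RG$-modules representing $\mathbb{R}\Gamma_X(\mathfrak{F}^{\bullet})$. Hence $\mathbb{R}\Gamma_X$ factors as a triangulated functor $D^b_{G,c}(X_{\rm et})\to D^b(RG)$, and passage to Grothendieck groups yields the homomorphism $\gamma_X$.

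Third, for the identification on $\r^{\sharp}$, the complex $\mathbb{R}\Gamma_X(\r^{\sharp})$ computes the equivariant \'etale cohomology $\H^*(X;\r^{\sharp})$ of $X$ with constant coefficients as a perfect $RG$-complex. Poincar\'e duality (recalled in the paragraph preceding the theorem) combined with self-injectivity of $R=\ZZ/l^n\ZZ$ supplies the isomorphisms
$$
\H^j(X;\check{\r^{\sharp}}(d))\cong\Hom_R(\H^{2d-j}_c(X;\r^{\sharp}),R),
$$
and summing with alternating signs (using that $2d$ is even) produces both of the displayed equalities in $G_0(D^b(RG))$.

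The principal obstacle is perfection in the second step: bounded constructibility upstairs, together with stabilizer-level projectivity downstairs (each $i(\mathfrak{I}^t)(\phi)$ is injective, hence projective, over $RG_{\phi}$ since finite group algebras are self-injective), must globalise to a genuinely perfect complex of $RG$-modules rather than a merely bounded-below complex of projectives. This rests on the finite $l$-cohomological dimension of Deligne--Lusztig varieties in \'etale cohomology, which is the essential input of Srinivasan's argument. Once perfection is in hand, the rest of the proof is essentially formal.
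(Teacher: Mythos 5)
Your argument is sound, but it is constructed along a different line than the paper's own proof. The paper's proof is a one-liner that leans entirely on the adjunction: $\Gamma_X = RK_{\pi}\circ i$ is left exact, its left adjoint $\sharp\circ Res_{\pi}$ is \emph{exact}, and this exactness is what guarantees the induced pair of adjoint functors on bounded derived categories $D^b_G(X_{\rm et})\rightleftarrows D^b_G(pt)$; one then restricts to the constructible subcategories, with $D^b_{G,c}(pt)=D^b(RG)$. You instead build $\mathbb{R}\Gamma_X$ directly from $G$-equivariant injective resolutions and the observation (made in the paper's preamble, not its proof) that injectives in ${\rm Sh}_G(X_{\rm et})$ evaluate at $\Id_X$ to projective $RG$-modules, then verify the output lies in $D^b(RG)$ via constructibility and finite $l$-cohomological dimension. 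Both are standard ways to derive a left exact functor with enough injectives, so the two routes land on the same functor; your version is more explicit about the model, whereas the paper's is more economical and gets the derived adjointness for free from the exactness of $\sharp\circ Res_{\pi}$. One remark on scope: the theorem's target is $G_0(D^b(RG))$, not $G_0(D^b(\mbox{proj-}RG))$, so your second step verifies strictly more than needed --- perfection of $\mathbb{R}\Gamma_X(\mathfrak{F}^\bullet)$ is an interesting strengthening (indeed the paper alludes to it, citing Srinivasan, in the paragraph before the theorem), but bounded, finitely generated cohomology over $RG$ already suffices for $\gamma_X$ to exist. The final Poincar\'e duality step is the same in both, being part of what was recalled before the theorem.
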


\begin{proof} The adjoint pair $\Gamma_X=RK_{\pi}\circ i$ and $\sharp\circ Res_{\pi}$ (exact) induce adjoint functors between $D^b_G(X_{\rm et})$ and $D^b_G(pt)$, which restrict to functors between the triangulated subcategories of constructible complexes $D^b_{G,c}(X_{\rm et})$ and $D^b_{G,c}(pt)=D^b(RG)$.
\end{proof}

\begin{remark} Ying Zong told me a way to construct a $G$-equivariant $\widetilde{X}$, when $G$ is finite, so that one may produce $\H^*_c(X;\r^{\sharp})$ directly (considering $G_0(D^b_{G,c}(\widetilde{X}_{\rm et}))$).
\end{remark}

Let $T$ be a maximal torus and $\theta$ a linear character. There is a $l$-adic sheaf $\mathfrak{F}_{\theta}$ so that the Deligne-Lusztig induction is $R^G_T(\theta)=\H^*_c(X;\mathfrak{F}_{\theta})$. However, we do not know how to establish a similar result for $\mathbb{Z}_l$-sheaves.

\end{document}